\documentclass[reqno]{amsart}
\usepackage{amsmath,amssymb,mathrsfs}

\newtheorem{thm}{Theorem}[section]

\newtheorem{lem}[thm]{Lemma}
\newtheorem{prop}[thm]{Proposition}

\newtheorem{rmk}[thm]{Remark}

\newtheorem*{thma}{Theorem A}
\newtheorem*{thmb}{Theorem B}
\newtheorem*{prc}{Proposition C}
\newtheorem*{prd}{Proposition D}

\numberwithin{equation}{section}

\begin{document}

\title[On the flips for a synchronized system]{On the flips for a synchronized system}

\author[H. Choi and Y.-O. Kim]{Hyekyoung Choi and Young-One Kim}

\subjclass[2010]{Primary {37B10}; Secondary {54H20}}
\keywords{shift space, flip, automorphism, synchronized system, coded system, conjugacy}
\thanks{The first author was supported by the BK21-Mathematical Sciences Division.}
\address{Hyekyoung Choi: Departmemt of Mathematical Sciences and Research Institute of Mathematics,
Seoul National University, Seoul 151-747, Korea}\email{yuritoy1@snu.ac.kr}
\address{Young-one Kim: Departmemt of Mathematical Sciences and Research Institute of Mathematics,
Seoul National University, Seoul 151-747, Korea}\email{kimyo@math.snu.ac.kr}

\maketitle
\begin{abstract}
It is shown that if an infinite synchronized system has a flip, then
it has infinitely many non-conjugate flips, and that the result
cannot be extended to the class of coded systems.
\end{abstract}

\section{Introduction}
Let $X$ be a (two-sided) shift space, so that $(X,\sigma_{X})$  is
an invertible topological dynamical
    system. A homeomorphism $\varphi : X \rightarrow X$ is called a \textit{flip}
    for $(X, \sigma_{X})$ if $\varphi^2 =\text{id}_{X}$ and
    $\varphi\sigma_{X}=\sigma_{X}^{-1}\varphi$. In this case, we say
    that $(X, \sigma_{X}, \varphi)$ is a \textit{shift-flip system}.
The simplest one may be the reversal map $\rho$ defined by
$\rho(x)_i = x_{-i}$, provided $X$ is closed under $\rho$.
    Two shift-flip systems
    $(X, \sigma_{X}, \varphi)$ and $(Y, \sigma_{Y}, \psi)$ are said to be \textit{conjugate} if there is a homeomorphism
    $\Phi : X \rightarrow Y$ such that $\Phi\circ\sigma_{X}=\sigma_{Y}\circ \Phi$ and $\Phi\circ\varphi=\psi\circ
    \Phi$. The homeomorphism $\Phi$ is called a {\it conjugacy} from  $(X, \sigma_{X}, \varphi)$ to $(Y, \sigma_{Y}, \psi)$.
    Two flips $\varphi$ and $\varphi'$ for $(X,\sigma_{X})$ are said to be
    conjugate if the shift-flip systems $(X, \sigma_{X}, \varphi)$
    and $(X, \sigma_{X}, \varphi')$ are conjugate. Thus the flips $\varphi$ and $\varphi'$
    are conjugate if and only if there is an automorphism $\theta$ of $(X,\sigma_{X})$ such
    that $\theta\varphi=\varphi' \theta$.

Suppose $\varphi$ is a flip for $(X,\sigma_{X})$. Then the shift
dynamical systems $(X,\sigma_{X})$ and $(X,\sigma_{X}^{-1})$ are
conjugate, but this does not hold in the general case (\cite{LM},
Examples 7.4.19 and 12.3.2). Thus not every shift dynamical system
has a flip. On the other hand, it is easy to see that the maps
$\sigma_{X}^m\varphi$, $m\in\Bbb Z$, are flips for $(X,\sigma_{X})$,
and that they are all distinct whenever $|X|=\infty$. Therefore if
an infinite shift dynamical system has a flip, then it has
infinitely many different ones. But $\sigma_{X}^m\varphi$ and
$\sigma_{X}^n\varphi$ are conjugate whenever $n-m$ is even, because
$$
\sigma_{X}^{(n-m)/2}\left(\sigma_{X}^m\varphi\right) =
\left(\sigma_{X}^n\varphi\right)\sigma_{X}^{(n-m)/2}.
$$
Although the flips $\sigma_{X}^m\varphi$, $m\in\Bbb Z$, are all
distinct, each of them is conjugate to one of the two flips
$\varphi$ and $\sigma_X\varphi$.

In this paper, we will be interested in the property that {\it if
$(X, \sigma_X)$ has a flip, then it has infinitely many
non-conjugate ones.} In \cite{KLP}, it is shown that if
$|\mathcal{A}|\geq 2$, then the full $\mathcal A$-shift
$(\mathcal{A}^{\mathbb{Z}}, \sigma)$ has infinitely many
non-conjugate flips. Let $M$ denote the Morse shift \cite{GH, MH},
that is, $M$ is the set of bi-infinite sequences in $\{0,1\}^{\Bbb
Z}$ which does not contain any block in the set
$$
\{awawa : a\in\{0,1\}\text{ and } w\in\mathcal{B}(\{0,1\}^{\Bbb
Z})\}.
$$
It is evident that the reversal map $\rho$ and the map $\psi : M\to
M$ defined by  $\psi(x)_i = 1-x_{-i}$ are flips for $(M, \sigma_M)$.
Suppose $\varphi$ is a flip for $(M, \sigma_M)$. Then $\rho\varphi$
is an automorphism of $(M, \sigma_M)$. But it is known \cite{Cov}
that the automorphism group of $(M, \sigma_M)$ is generated by
$\{\sigma_M, \rho\psi\}$, and we have $(\rho\psi)^2=\text{id}_M$;
hence $\varphi$ is conjugate to one of the four flips $\rho$,
$\psi$, $\sigma_M\rho$ and $\sigma_M\psi$. Thus every infinite full
shift has the property, while the Morse shift does not.

The purpose of this paper is to establish the following:

\begin{thma}\label{thmA}
    If $X$ is a synchronized system, $|X|=\infty$ and there is a flip for $(X,\sigma_{X})$,
     then there are infinitely many non-conjugate flips for $(X,\sigma_{X})$.
\end{thma}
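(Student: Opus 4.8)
The plan is to isolate a conjugacy invariant of flips that is sensitive enough to separate infinitely many of them, and then to realise infinitely many of its values by splicing marker automorphisms onto a normal form of the given flip.

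\textbf{An invariant, and the reduction.}
For a flip $\psi$ of $(X,\sigma_{X})$ and $n\ge 1$, let $s_{\psi}(n)$ be the number of $\sigma_{X}$-orbits $O\subseteq X$ of period dividing $n$ with $\psi(O)=O$ (the \emph{$\psi$-symmetric} periodic orbits, equivalently the $\langle\sigma_{X},\psi\rangle$-invariant ones). An automorphism $\theta$ of $(X,\sigma_{X})$ permutes $\sigma_{X}$-orbits preserving their periods and sends $\psi$-symmetric orbits bijectively onto $\theta\psi\theta^{-1}$-symmetric ones; hence $n\mapsto s_{\psi}(n)$ is a conjugacy invariant of $\psi$. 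It therefore suffices to construct flips $\psi_{1},\psi_{2},\dots$ of $(X,\sigma_{X})$ and integers $n_{1}<n_{2}<\cdots$ with $s_{\psi_{k}}(n_{k})\ne s_{\psi_{j}}(n_{k})$ whenever $k<j$: such flips are then pairwise non-conjugate.

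\textbf{A normal form for the given flip.}
Let $\varphi$ be the given flip. Being continuous with $\varphi\sigma_{X}=\sigma_{X}^{-1}\varphi$, it is a reversing sliding block code: $\varphi(x)_{i}=F\bigl(x_{[-i-m,-i+m]}\bigr)$ for some $m$ and local rule $F$. Passing to the conjugate system on the alphabet $\mathcal B_{2m+1}(X)\times\mathcal B_{2m+1}(X)$ via $\Psi(x)_{i}=\bigl(x_{[i-m,i+m]},\,(\varphi x)_{[-i-m,-i+m]}\bigr)$, one checks that $\Psi$ is a conjugacy of $(X,\sigma_{X})$ onto its image and that, in the new coordinates, $\varphi$ becomes $y\mapsto\varphi'(y)$ with $\varphi'(y)_{i}=\tau(y_{-i})$, where $\tau$ swaps the two components of a letter. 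Replacing $(X,\sigma_{X},\varphi)$ by this conjugate copy, we assume henceforth that
\[
\varphi(x)_{i}=\tau(x_{-i})\qquad(i\in\mathbb Z)
\]
for an involution $\tau$ of the alphabet $\mathcal A$ of $X$. For a word $z$ put $\iota(z):=\widetilde{\tau(z)}$ (apply $\tau$ letterwise, then reverse); then $\iota$ is a length-preserving involution of $\mathcal B(X)$, since $\iota(z)$ is precisely the word $\varphi$ places, reversed, where $z$ stood, and $\varphi X=X$. Call $z$ \emph{$\tau$-palindromic} if $\iota(z)=z$; for such $z$, $\varphi$ fixes the ``shape'' of occurrences of $z$ while only mirroring their position.

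\textbf{The family of flips, and their separation.}
The crucial combinatorial input is that, because $X$ is an \emph{infinite} synchronized system, one can choose $\tau$-palindromic synchronizing marker words $m_{1},m_{2},\dots$ with $|m_{k}|\to\infty$, each self-locating in $X$, together with, for each $k$, two distinct $\tau$-palindromic words $u^{(k)}_{1}\ne u^{(k)}_{2}$ of a common length $\ell_{k}$ in $L_{m_{k}}:=\{u:m_{k}um_{k}\in\mathcal B(X)\}$, chosen so that $m_{k}u^{(k)}_{i}$ is aperiodic and $m_{k}u^{(k)}_{1}$, $m_{k}u^{(k)}_{2}$ are not cyclically equivalent. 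Given these, let $\theta_{k}$ be the automorphism of $(X,\sigma_{X})$ that leaves $x$ unchanged except that, among the words occurring between consecutive occurrences of $m_{k}$, it interchanges every $u^{(k)}_{1}$ with a $u^{(k)}_{2}$; this is a well-defined involutive automorphism because $m_{k}$ is a marker and $m_{k}u^{(k)}_{i}m_{k}\in\mathcal B(X)$ is synchronizing. Since $\iota$ fixes $m_{k}$ and each $u^{(k)}_{i}$, the map $\varphi$ carries the ``$m_{k}$-gap structure'' of any $x$ to its mirror image with the same gap words, so $\theta_{k}\varphi=\varphi\theta_{k}$, and hence $\psi_{k}:=\theta_{k}\varphi$ is again a flip of $(X,\sigma_{X})$ (the relation $\psi_{k}\sigma_{X}=\sigma_{X}^{-1}\psi_{k}$ being automatic). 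Put $n_{k}:=|m_{k}|+\ell_{k}$ and $y^{(k)}_{i}:=\overline{m_{k}u^{(k)}_{i}}\in X$. A short computation with the normal form gives $\varphi(y^{(k)}_{i})\in O(y^{(k)}_{i})$, so $O(y^{(k)}_{1})$ and $O(y^{(k)}_{2})$ are $\varphi$-symmetric; but $\psi_{k}(y^{(k)}_{i})=\theta_{k}\varphi(y^{(k)}_{i})\in\theta_{k}\bigl(O(y^{(k)}_{i})\bigr)=O(y^{(k)}_{3-i})$, so $\psi_{k}$ interchanges these two distinct orbits, and neither is $\psi_{k}$-symmetric. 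Every other $\sigma_{X}$-orbit of period dividing $n_{k}$ is fixed pointwise by $\theta_{k}$ (a gap equal to $u^{(k)}_{i}$ forces period $\ge n_{k}$, and a point of period $n_{k}$ carrying such a gap lies in $O(y^{(k)}_{1})\cup O(y^{(k)}_{2})$), so on it $\psi_{k}$ agrees with $\varphi$ and its symmetry status is unchanged. Hence $s_{\psi_{k}}(n_{k})=s_{\varphi}(n_{k})-2$, while $s_{\psi_{k}}(n)=s_{\varphi}(n)$ for $n<n_{k}$, because $\theta_{k}$ is then the identity on points of period dividing $n$. For $j>k$ this gives $s_{\psi_{j}}(n_{k})=s_{\varphi}(n_{k})\ne s_{\varphi}(n_{k})-2=s_{\psi_{k}}(n_{k})$, so the $\psi_{k}$ are pairwise non-conjugate, which proves Theorem~A.

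\textbf{The main obstacle.}
The reduction, the normal form, and the bookkeeping of the last step are essentially forced; the heart of the proof is the existence claim that opens the third paragraph — that an arbitrary infinite synchronized system carries arbitrarily long $\tau$-palindromic synchronizing marker words, together with pairs of distinct equal-length $\tau$-palindromic return words. This is precisely where $|X|=\infty$ is used, and it is delicate because $\varphi$ is an arbitrary flip (in particular $\tau$ may have no fixed letter and $X$ need not be closed under reversal). A plausible route: use that $\mathcal B(X)$ is $\iota$-invariant, together with transitivity and the synchronizing property, to produce a $\varphi$-symmetric periodic orbit whose period word contains a synchronizing word; the point $y$ of that orbit fixed by $\varphi$ satisfies $y_{-i}=\tau(y_{i})$, so its long central windows $y_{[-s,s]}$ are $\tau$-palindromic synchronizing words, and an ``$\iota$-symmetrised'' splicing inside $L_{m_{k}}$ (using $z\mapsto z\,m_{k}\,\iota(z)$, which is $\tau$-palindromic whenever it lies in $L_{m_{k}}$) yields the required $\tau$-palindromic return words. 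Once these words are in hand, checking that $\theta_{k}$ is a well-defined automorphism with $\theta_{k}\varphi=\varphi\theta_{k}$ and computing the orbit counts are routine — and it is exactly the $\tau$-palindromicity of $m_{k}$ and of the $u^{(k)}_{i}$ that makes $\theta_{k}$ commute with $\varphi$ and the orbits $O(y^{(k)}_{i})$ be $\varphi$-symmetric in the first place.
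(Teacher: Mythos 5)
Your overall architecture is the same as the paper's: recode $\varphi$ to a one-block flip with symbol involution $\tau$ (your normal form is exactly the paper's Lemma 2.1), build marker automorphisms $\theta$ with $\theta\varphi=\varphi\theta$ so that $\theta\varphi$ is again a flip, and separate the resulting flips by a periodic-point count (the paper uses $|F(\varphi;n)|=|\{x:\sigma_X^n(x)=\varphi(x)=x\}|$ and an inductive chain via its Propositions C and D; you count $\varphi$-symmetric orbits and produce all the flips at once). Those differences are cosmetic. The genuine gap is exactly the step you flag as ``the main obstacle,'' and your sketched route does not close it; it is essentially circular. You need arbitrarily long $\tau$-palindromic synchronizing words $m_k$, i.e.\ words fixed by $\iota$. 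By a compactness argument, arbitrarily long odd-length $\tau$-palindromes exist in $\mathcal B(X)$ if and only if $\varphi$ has a fixed point, and arbitrarily long even-length ones exist if and only if $\sigma_X\varphi$ does; if $\tau$ has no fixed letter there are no odd ones at all (e.g.\ the full $2$-shift with $\varphi(x)_i=1-x_{-i}$ has no $\varphi$-fixed point). Your route to palindromes starts from ``a $\varphi$-symmetric periodic orbit whose period word contains a synchronizing word,'' but the existence of such an orbit is not established and is of the same order of difficulty as what you are trying to prove; moreover, even granting it, a symmetric orbit of even period may carry a fixed-point-free reflection, so that only $\sigma_X\varphi$ (not $\varphi$) acquires fixed points and palindromes, forcing a case split you never make. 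This parity dichotomy is precisely the content of the paper's Proposition C (at least one of $A(\varphi)$, $A(\sigma_X\varphi)$ is nonempty), and the paper's construction deliberately avoids palindromic markers: its marker pair $\{d^*cd,\ d^*c^*d\}$ with $c^*\ne c$ is $\iota$-invariant only as a set, not word by word, which is enough to make the swapping automorphism commute with $\varphi$ without ever producing a $\tau$-fixed word.

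Two secondary points. First, the combinatorial side conditions you assert (the markers self-locate, $m_ku^{(k)}_i$ is aperiodic, $m_ku^{(k)}_1$ and $m_ku^{(k)}_2$ are not cyclically equivalent, no marker occurs in a point of period $<n_k$) are not free; they require an extremal choice of the kind made in the paper's Lemma 2.2, whose condition that $fbfa$ occurs in no $(|a|+1)$-periodic point is exactly what rules out these degeneracies. Second, your invariant $s_\psi(n)$ and the bookkeeping $s_{\psi_k}(n_k)=s_\varphi(n_k)-2$, $s_{\psi_j}(n_k)=s_\varphi(n_k)$ for $j>k$ are fine once the words exist. So the proposal is a correct strategy whose central existence lemma — the actual mathematical content of the theorem, occupying Lemmas 2.2 and 2.3 and Propositions C and D in the paper — remains unproven.
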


\begin{thmb}\label{thmB}
    There is an infinite coded system $W$ such that the reversal map $\rho$
    is a flip for $(W, \sigma_W)$
    and $\{ \sigma_W^m\rho : m\in\Bbb Z\}$ is the set of flips for $(W,
    \sigma_W)$.
\end{thmb}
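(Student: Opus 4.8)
The plan splits into an easy reduction and a hard construction. For the reduction I would use the standard correspondence between flips and automorphisms. If $\varphi$ is any flip for $(W,\sigma_W)$, then $\theta:=\rho\varphi$ is a homeomorphism of $W$ commuting with $\sigma_W$, hence an automorphism; one has $\varphi=\rho\theta$, and $\varphi^2=\mathrm{id}_W$ is equivalent to $\rho\theta\rho=\theta^{-1}$. Conversely, $\rho\theta$ is a flip for every automorphism $\theta$ with $\rho\theta\rho=\theta^{-1}$. Since $\rho\sigma_W^m\rho=\sigma_W^{-m}$ and $\rho\sigma_W^m=\sigma_W^{-m}\rho$ for all $m$, it follows that as soon as one has an infinite coded system $W$ which is closed under $\rho$ and satisfies $\operatorname{Aut}(W,\sigma_W)=\{\sigma_W^m:m\in\Bbb Z\}$, the reversal $\rho$ is a flip and the set of all flips is exactly $\{\rho\sigma_W^m:m\in\Bbb Z\}=\{\sigma_W^m\rho:m\in\Bbb Z\}$, these being pairwise distinct because $|W|=\infty$. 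So the problem reduces entirely to constructing such a $W$. Observe that any $W$ of this kind is automatically not synchronized: otherwise Theorem A would yield infinitely many non-conjugate flips, whereas $\{\sigma_W^m\rho:m\in\Bbb Z\}$ falls into at most two conjugacy classes.

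This remark already dictates the nature of the construction. A coded system in which code-word boundaries can be detected locally is synchronized — the boundary pattern is then a synchronizing word — so $W$ must be of \emph{Morse type}: coded but not synchronized. Moreover, unlike the Morse shift, which carries the order-two automorphism $\rho\psi$ and hence has as many as four conjugacy classes of flips, our $W$ must have no non-shift automorphism whatsoever. The approach I would take is to fix a bi-infinite sequence $z$ that is a palindrome ($z_i=z_{-i}$) and as rigid as possible — aperiodic, with no nontrivial local self-symmetry and, crucially, none of the substitutional symmetry that produces $\rho\psi$ on the Morse shift — and then let $W$ be the coded system generated by a suitable countable family of palindromic words attached to the factor structure of $z$, the family chosen so that its members overlap and interlock enough that no word occurring in $W$ is synchronizing. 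Because every generator is a palindrome, the set of bi-infinite concatenations of generators is $\rho$-invariant, and hence so is its closure $W$; this gives at once that $\rho$ is a flip and the inclusion of $\{\sigma_W^m\rho:m\in\Bbb Z\}$ in the set of flips of $(W,\sigma_W)$, with nothing further to prove. What remains is to choose $z$ and the generators concretely so that $W$ really is infinite and really fails to be synchronized.

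The substantial part, and the step I expect to be the main obstacle, is the reverse inclusion, that is, $\operatorname{Aut}(W,\sigma_W)=\langle\sigma_W\rangle$. I would follow the usual route to automorphism rigidity: represent a given $\theta\in\operatorname{Aut}(W,\sigma_W)$ by a sliding block code of some radius $r$ and use the rigidity of the generating mechanism of $z$ to argue that $\theta$ can do nothing but translate. The difficulty is that $W$, being coded, contains many degenerate points — periodic points obtained by repeating a short generator, points asymptotic to these, and the limit points introduced when the closure is formed — and on each of these families $\theta$ must be shown to act by one and the same power of $\sigma_W$. Forcing this is where all four requirements collide: $W$ has to be coded (hence full of such degenerate points), $\rho$-invariant, free of synchronizing words, and rigid. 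The textbook rigid examples are minimal substitution subshifts, which are not coded, so rigidity cannot simply be imported; the crux will be to graft exactly enough \emph{coded slack} onto a rigid core to destroy synchronization while checking that neither the extra concatenations nor the limit points create a new automorphism and that the reversal symmetry survives. Once $\operatorname{Aut}(W,\sigma_W)=\langle\sigma_W\rangle$ is established, the first paragraph completes the proof.
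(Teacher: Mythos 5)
Your first paragraph is correct and is exactly the reduction the paper uses: once one has an infinite coded system $W$ closed under $\rho$ with $\operatorname{Aut}(W,\sigma_W)=\{\sigma_W^m : m\in\Bbb Z\}$, the identity $\rho\sigma_W^m\rho=\sigma_W^{-m}$ shows that $\rho$ is a flip and that every flip has the form $\sigma_W^m\rho$. Your observation that such a $W$ cannot be synchronized (by Theorem A) is also a correct and useful sanity check on the shape of the construction.

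The genuine gap is that everything after the reduction is a description of what a proof would have to accomplish, not a proof. The entire mathematical content of Theorem B is the explicit construction of $W$ and the verification that $\operatorname{Aut}(W,\sigma_W)$ is trivial, and neither appears in your proposal: you do not specify the alphabet, the generating code, the sequence $z$, or the ``suitable countable family of palindromic words,'' and you explicitly defer the rigidity argument (``the crux will be to graft exactly enough coded slack onto a rigid core\dots''). Your guiding heuristic --- a rigid palindromic core plus coded slack --- also does not match how such examples are actually built. The paper adapts the Fiebig--Fiebig construction: the alphabet is $\{0,1,2\}$, a block $0a^n0$ with $a\in\{1,2\}$ is admissible only when the value of $a$ is determined by whether $n$ lies in $I=\bigcup_k[2^{2k},2^{2k+1}]$ together with the pair of symbols at distance $n$ on either side, and the code consists of the blocks $0^{|w|+1}w0^{|w|+1}$ for ``stable'' $w$. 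Rigidity is then proved by a direct combinatorial argument: a sliding block code representing an automorphism must send $0^\infty$ to $0^\infty$ (else runs $0a^M0^na^M0^\infty$ would force $b^{n+j}$ to be simultaneously a run of $1$'s and of $2$'s for infinitely many $n$), must preserve runs of $1$'s of length $n\in I$ without shifting their length, and hence must satisfy $\mathcal Z(\sigma_W^m\theta(x))\supset\mathcal Z(x)$ on the dense set of finitely-supported points; applying the same to $\theta^{-1}$ forces equality. Nothing resembling a ``rigid aperiodic palindrome $z$'' enters; the rigidity comes from the arithmetic of the set $I$, not from a minimal substitution core. Without supplying a concrete construction and carrying out such an argument, the proposal does not establish the theorem.
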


In Theorem B, the result implies that every flip for $(W, \sigma_W)$
is conjugate to one of the two flips $\rho$ and $\sigma_W\rho$,
because $\sigma_W^m\rho$ and $\sigma_W^n\rho$ are conjugate whenever
$m-n$ is even. We recall that every irreducible sofic shift is a
synchronized system and every synchronized system is coded
\cite{Kri, LM}.

If $\varphi$ is a flip for $(X, \sigma_X)$, we denote the set $\{
x\in X : \sigma_X^n(x)=\varphi(x)=x\}$ by $F(\varphi; n)$ for $n=1,
2, 3, \dots$, and denote by $A(\varphi)$ the  set of points $x\in X$
such that a finitary (intrinsically synchronizing) block appears
infinitely often in $x$ and
$$
0<\left|\{i\in\Bbb Z : \varphi(x)_i \ne x_i\}\right|<\infty.
$$
It is clear that if $(X, \sigma_{X}, \varphi)$ and $(Y, \sigma_{Y},
\psi)$ are shift-flip systems and $\Phi$ is a conjugacy from  $(X,
\sigma_{X}, \varphi)$ to $(Y, \sigma_{Y}, \psi)$, then
$\Phi(F(\varphi; n))= F(\psi; n)$ for all $n$, and
$\Phi(A(\varphi))=A(\psi)$.  It is also clear that $|F(\varphi;
n)|<\infty$ for all $n$. Now, Theorem A is an immediate consequence
of the following technical results which are proved in the next
section, and an inductive argument.

\begin{prc}
    If $X$ is an infinite synchronized system and $\varphi$ is a flip for $(X, \sigma_{X})$,
    then at least one of the two sets $A(\varphi)$ and $A(\sigma_X \varphi)$ is non-empty.
    \end{prc}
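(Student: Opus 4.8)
The plan is to exploit the interplay between the flip $\varphi$ and the synchronizing structure of $X$, using the fact that an intrinsically synchronizing block lets us "glue" tails of orbits freely. Fix an intrinsically synchronizing block $u$ for $X$ (which exists since $X$ is synchronized) and let $U$ be the set of points in which $u$ occurs infinitely often to the left and to the right; since $X$ is infinite, $U$ is infinite. The first step is to understand how $\varphi$ acts near an occurrence of $u$: because $\varphi\sigma_X = \sigma_X^{-1}\varphi$, the map $\varphi$ reverses the orientation of $x$, so if $x$ contains $u$ centered (say) at coordinate $0$, then $\varphi(x)$ contains the reversed block $\overline{u}$ near coordinate $0$. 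The idea is to produce a point $x$ that is "palindromic at infinity" with respect to either $\varphi$ or $\sigma_X\varphi$: a point that agrees with its image under one of these two flips at all but finitely many coordinates, while differing at at least one coordinate, and in which $u$ (or some chosen finitary block) recurs.

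Here is the construction I would attempt. Pick a point $y\in U$, and locate two occurrences of $u$ in $y$, one far to the left of coordinate $0$ and one far to the right, say $u$ occupies coordinates roughly $[-N, -N+|u|)$ and $[N, N+|u|)$. Now form a new point $x$ by taking the central block $y_{[-N,\,N+|u|)}$ of $y$ and extending it periodically — or, better, symmetrically — on both sides: to the right of coordinate $N+|u|$ reattach a right-infinite tail that is allowed after $u$, and to the left attach the \emph{reverse} of that same tail after the left copy of $u$. Because $u$ is intrinsically synchronizing, both of these gluings stay inside $X$, and the resulting point $x$ has a reversal symmetry about some center $c$: there is a reflection (an appropriate $\sigma_X^k\rho$-type symmetry of the sequence) fixing $x$ outside a bounded window. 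The key point is that the reversal map composed with $\varphi$ is an \emph{automorphism} of $(X,\sigma_X)$, hence is given by a sliding block code with some memory/anticipation $m$; so $\rho\varphi$ (or $\rho\sigma_X\varphi$) moves each coordinate of $x$ by an amount determined only by a bounded window. On the symmetric point $x$, outside the central window all these windows "see" the reflection symmetry, so $\varphi(x)$ and $x$ (or a shift of $x$) agree off a bounded set. Adjusting by $\sigma_X$ if necessary — this is exactly the point of allowing both $A(\varphi)$ and $A(\sigma_X\varphi)$ — we can arrange that one of $\varphi$, $\sigma_X\varphi$ fixes $x$ off a finite set. Finally, one must check that $x$ can be chosen so that this finite set is nonempty; if $\varphi(x)=x$ for the symmetric point, perturb the construction (e.g. change one symbol in the central window, or use a slightly asymmetric tail) to force a disagreement at one coordinate while preserving eventual agreement, so that $x\in A(\varphi)$ or $x\in A(\sigma_X\varphi)$.

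The main obstacle I anticipate is the bookkeeping needed to pass from "$\rho\varphi$ is a sliding block code of radius $m$" together with "$x$ is reflection-symmetric outside a window of radius $R$" to the conclusion that $\varphi(x)$ and a prescribed shift of $x$ literally coincide outside a finite set — one has to track how the center of the reflection interacts with the center used to define $A$, and the parity of the relevant shift (which is precisely why the statement is about the pair $\{\varphi,\sigma_X\varphi\}$ rather than $\varphi$ alone). A secondary obstacle is ensuring the occurrence condition in the definition of $A(\varphi)$: the chosen point must contain a finitary block infinitely often, which is why we build $x$ out of tails glued after the synchronizing block $u$ so that $u$ itself recurs infinitely often on at least one side; if the symmetric construction only guarantees recurrence on one side, I would instead make the tail itself eventually periodic with period containing $u$, forcing $u$ to recur on both sides. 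Once a point $x$ in $A(\varphi)\cup A(\sigma_X\varphi)$ is produced, the proposition follows.
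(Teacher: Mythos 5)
Your overall strategy --- glue a right tail after a synchronizing block, attach a ``reversed'' copy of that tail on the left, observe that the resulting point is fixed off a finite window by one of $\varphi$, $\sigma_X\varphi$ according to a parity, and then perturb if it is exactly fixed --- is the same as the paper's. But two steps, as written, are genuinely broken. First, you invoke the plain reversal map $\rho$ and assert that $\rho\varphi$ is an automorphism of $(X,\sigma_X)$; this presupposes that $X$ is closed under $\rho$, which is not part of the hypotheses. For the same reason, attaching the literal reverse of a legal right tail on the left side need not produce a point of $X$: intrinsic synchronization lets you concatenate a legal left ray ending in $u$ with a legal right ray beginning with $u$, but it does not make the reversed tail legal. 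The correct move (which the paper makes via Lemma 2.1) is to recode so that $\varphi$ is a one-block flip, $\varphi(x)_i=\tau(x_{-i})$, and to use the $*$-reversed tail $\cdots v^*f^*v^*f^*$ (reverse composed with the symbol involution $\tau$) on the left; that tail is legal because it is the image of a legal tail under $\varphi$. With this fix your symmetric point becomes exactly the paper's point $x=\cdots v^*f^*v^*f^*\,w\,fvfvf\cdots$, and the parity issue you flag is resolved simply by the parity of $|w|$, with no sliding-block-code bookkeeping needed.

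Second, and more seriously, the final step --- ``if $\varphi(x)=x$, perturb the construction to force a disagreement at one coordinate'' --- is not a detail but the technical heart of the proposition, and your proposed fixes do not work. Changing one symbol of a point of a shift space generally produces a sequence outside $X$, and using an ``asymmetric tail'' changes infinitely many coordinates unless the asymmetry is confined to a finite legal insertion between two occurrences of the synchronizing block; you then need such an insertion $w$ with $f^*wf\in\mathcal B(X)$ and $w^*\ne w$ (of controlled parity), and it can happen that every naively chosen insertion is $*$-palindromic. The paper handles exactly this case with Lemma 2.2 (in an infinite irreducible shift there are return words $a,b$ to $f$ such that $fbfa$ occurs in no $(|a|+1)$-periodic point) and Lemma 2.3 (which builds a non-$*$-palindromic central block $afa(fb)^2(fa)^{2N}$ from them). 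Note that this is where the hypothesis $|X|=\infty$ must enter --- for a single periodic orbit both $A(\varphi)$ and $A(\sigma_X\varphi)$ are empty --- and your argument never uses it, which is a reliable sign that the perturbation step is missing an essential idea.
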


 \begin{prd}
    If $X$ is an infinite synchronized system, $\varphi$ is a flip for $(X,
    \sigma_{X})$, and $A(\varphi) \ne \emptyset$, then there is a flip $\psi$ for $(X, \sigma_{X})$ such that
 \begin{itemize}
      \item[(a)] $|F(\varphi; n)| \leq |F(\psi; n)|$ for all $n$,
      \item[(b)] $|F(\varphi; n)| < |F(\psi; n)|$ for some $n$, and
      \item[(c)] $A(\psi) \ne \emptyset$.
     \end{itemize}
    \end{prd}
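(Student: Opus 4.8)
The plan is to use the hypothesis $A(\varphi)\neq\emptyset$ to produce a $\sigma_X$-periodic orbit that $\varphi$ does not preserve, and then to modify $\varphi$ along that orbit by an automorphism so that the resulting flip fixes the orbit while, on periodic points as a whole, losing none of the points counted by $F(\varphi;\cdot)$.

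First fix $x\in A(\varphi)$ and an intrinsically synchronizing block $w$ occurring infinitely often in $x$; thus $\varphi(x)$ agrees with $x$ off a finite nonempty set. If $x$ were $\sigma_X$-periodic then $\varphi(x)$ would be $\sigma_X$-periodic of the same period and, agreeing with $x$ off a finite set, would equal $x$; so $x$ is not periodic, and its defect represents genuine asymmetry ``near $w$''. Cutting $x$ along two far-apart occurrences of $w$ and closing up by the synchronizing property, I would obtain a periodic point $z\in X$, of least period $p$, whose $\sigma_X$-orbit $O$ meets $w$ and is disjoint from $O':=\varphi(O)$; that the occurrences can be chosen to make $O\neq O'$ is where $\varphi(x)\neq x$ is used (if no such choice worked, $x$ would be forced to be eventually $\varphi$-symmetric on both sides, hence $\varphi(x)=x$). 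Then $\varphi$ interchanges the disjoint orbits $O$ and $O'$, and neither contains a fixed point of $\varphi$.

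Using $w$ (lengthened to a marker) I would construct an automorphism $\tau$ of $(X,\sigma_X)$ interchanging $O$ and $O'$ with $\tau(z)=\varphi(z)$, anchored by $w$ so that $\varphi\tau\varphi^{-1}=\tau^{-1}$; then $\psi:=\tau\varphi$ satisfies $\psi^2=\tau(\varphi\tau\varphi^{-1})\varphi^2=\tau\tau^{-1}=\text{id}_X$ and $\psi\sigma_X=\tau\sigma_X^{-1}\varphi=\sigma_X^{-1}\psi$, so $\psi$ is a flip for $(X,\sigma_X)$. On $O$ one computes $\psi(\sigma_X^k z)=\tau(\sigma_X^{-k}\varphi(z))=\sigma_X^{-k}z$, so $z$ (and $\sigma_X^{p/2}z$ if $p$ is even) is fixed by $\psi$ but not by $\varphi$, since $O\neq O'=\varphi(O)$. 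Taking $\tau$ to swap the period blocks of $z$ and $\varphi(z)$, which by the least-period property occur in no $\varphi$-fixed point of period dividing $p$, one gets that $\tau$ fixes $F(\varphi;p)$ pointwise, whence $F(\varphi;p)\subseteq F(\psi;p)$ and $|F(\varphi;p)|<|F(\psi;p)|$; this is (b). For (c): $z$ contains $w$, so perturbing $z$ inside one far-out occurrence of $w$ and re-closing by the synchronizing property (using that $X$ is infinite, so the perturbation can be taken nontrivial) yields $x'\in X$ in which $w$ still occurs infinitely often and for which $\psi(x')$ differs from $x'$ only near that occurrence and near its $\varphi$-image; hence $x'\in A(\psi)$.

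The main obstacle is (a): $|F(\varphi;n)|\le|F(\psi;n)|$ for every $n$. Since $\tau\neq\text{id}_X$, it necessarily moves some periodic points fixed by $\varphi$ — for some shift spaces every block occurs in a $\varphi$-fixed periodic point — so one cannot hope for $F(\varphi;n)\subseteq F(\psi;n)$ at all $n$; instead one must choose the orbit $O$ and the marker defining $\tau$ carefully enough that, at every period $n$, the $\sigma_X^n$-periodic points newly fixed by $\psi$ (those built from $O$ and $O'$, together with those $\varphi$-fixed points that $\tau$ carries into the fixed set of $\psi$) always at least balance the $\varphi$-fixed periodic points that $\tau$ disturbs. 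Establishing this balancing — for which the freedom in $x$, $w$ and $O$, together with $X$ being infinite and synchronized, must all be exploited, and which also requires dispatching the degenerate ``too symmetric'' case in the construction of $z$ — is the heart of the proof; everything else is formal.
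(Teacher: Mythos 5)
Your overall strategy---compose $\varphi$ with a marker-built involution so that a periodic orbit not fixed by $\varphi$ becomes fixed by the new flip---is the same as the paper's, and several pieces are sound: extracting a genuinely asymmetric block from $A(\varphi)\ne\emptyset$ (the paper gets a block $c$ of odd length with $f^*cf\in\mathcal B(X)$ and $c^*\ne c$, where $f$ is a finitary symbol), and the formal verification that $\psi=\tau\varphi$ is a flip when $\varphi\tau\varphi^{-1}=\tau^{-1}$. But you have explicitly left open the one step that carries the proposition, namely (a), and the difficulty is not resolved by any ``balancing'' of gains against losses. The paper's device is different in kind and does no counting at all. It defines, for \emph{every} subset $A\subseteq\mathbb{Z}$, an involutive homeomorphism $\theta_A$ that star-reverses the central block ($c$ or $c^*$) at exactly those marker positions lying in $A$, and records the calculus $\theta_A\theta_B=\theta_{A\triangle B}$, $\sigma_X\theta_A=\theta_{(-1+A)}\sigma_X$, $\varphi\theta_A=\theta_{(-A)}\varphi$. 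The new flip is $\psi=\theta_{\mathbb{Z}}\varphi$ (note $\theta_{\mathbb{Z}}$ commutes with $\varphi$ since $-\mathbb{Z}=\mathbb{Z}$; you never actually construct your $\tau$ with $\varphi\tau\varphi^{-1}=\tau^{-1}$). For each $n$ the inequality $|F(\varphi;n)|\le|F(\psi;n)|$ is witnessed by the \emph{injective} map $\theta_{H(n)}$, where $H(n)=\bigcup_{k}\{i:nk<i<n(k+\tfrac12)\}$: since $n+H(n)=H(n)$ it preserves $n$-periodicity, and since any $x$ with $\sigma_X^n(x)=\varphi(x)=x$ has no marker position in $\mathbb{Z}\setminus(H(n)\cup(-H(n)))$ (this is where $c^*\ne c$ enters), one computes $\theta_{\mathbb{Z}}\varphi(\theta_{H(n)}(x))=\theta_{\mathbb{Z}\triangle(-H(n))}(x)=\theta_{H(n)}(x)$.

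Without this (or an equivalent) mechanism your argument does not close. As you yourself observe, a sliding-block automorphism cannot be supported on just the two orbits $O$ and $O'$; it will act on every point containing the marker pattern, hence will move $\varphi$-fixed periodic points of all sufficiently large periods, and no choice of marker makes $F(\varphi;n)\subseteq F(\psi;n)$ hold for all $n$. Your claim that the period block of $z$ ``occurs in no $\varphi$-fixed point of period dividing $p$'' at best controls the single period $p$, whereas (a) demands all $n$ simultaneously; the family $\theta_{H(n)}$ is precisely what replaces the balancing you defer. Secondarily, your argument for (c) by perturbing $z$ is vaguer than needed: once one $\psi$-fixed point containing a finitary block is produced, the paper's Lemma 2.3 yields $A(\psi)\ne\emptyset$ directly.
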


We prove Theorem B by constructing $W$ explicitly (Section 3). In
\cite{FF}, it is shown that there is a coded system whose
automorphism group is generated by the shift map and is isomorphic
to $\Bbb Z$ (\cite{FF}, Corollary 2.2), but it is not clear whether
the coded system has a flip. Fortunately, we can simplify the
construction to obtain a coded system $W$ such that the automorphism
group of $(W, \sigma_W)$ is $\{\sigma_W^m : m\in\Bbb Z\}$ and the
the reversal map $\rho$ is a flip for $(W, \sigma_W)$. It is then
trivial to see that $W$ has the property stated in Theorem B.

\section{Proof of Propositions C and D}

We start with some preliminaries. Let $X$ be a shift space and
$\varphi$ be a flip for $(X, \sigma_{X})$.
    If $\varphi(x)_{0}=\varphi(x')_{0}$ whenever $x_{0}=x'_{0}$,
    then there is a unique map $\tau: \mathcal{B}_{1}(X) \rightarrow \mathcal{B}_{1}(X)$ such that
    \begin{displaymath}
     \varphi(x)_{i}=\tau(x_{-i}) \quad (x\in X , i \in \mathbb{Z}),
    \end{displaymath}
    and consequently $\tau^2=\text{id}_{\mathcal{B}_{1}(X)}$. In this case, we say that $\varphi$ is a
    \textit{one-block} flip  and $\tau$ is the \textit{symbol map} of $\varphi$. The following lemma states that every flip for a
    shift dynamical system can be recoded to a one-block flip.

    \begin{lem}\label{lem1}
    Suppose $X$ is a shift space and $\varphi$ is a flip for $(X,
    \sigma_{X})$. Then there are a finite set $\mathcal{A}$, a shift space
    $Y$ over $\mathcal{A}$, and a one-block flip $\psi$
    for $(Y, \sigma_{Y})$ such that $(Y, \sigma_{Y}, \psi)$ is conjugate to $(X, \sigma_{X},
    \varphi)$.
    \end{lem}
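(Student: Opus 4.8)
The plan is to mimic the standard recoding argument for sliding block codes (cf. Lyndon--Marcus), adapting it to respect the flip structure. Since $\varphi$ is a homeomorphism of the compact zero-dimensional space $X$ commuting appropriately with $\sigma_X$, it is in particular a sliding block code: there is $N$ such that $\varphi(x)_0$ depends only on the central block $x_{[-N,N]}$. The natural idea is to pass to the higher block presentation $X^{[-N,N]}$, whose alphabet is $\mathcal{B}_{2N+1}(X)$, via the usual conjugacy $\beta\colon X\to X^{[-N,N]}$ given by $\beta(x)_i = x_{[i-N,i+N]}$. On $Y := X^{[-N,N]}$ we have the shift $\sigma_Y$, and we transport $\varphi$ to $\psi := \beta\circ\varphi\circ\beta^{-1}$, which is automatically a flip for $(Y,\sigma_Y)$ and makes $\beta$ a conjugacy of shift-flip systems.

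The point to check is that $\psi$ is then a \emph{one-block} flip. First I would write down, for $y=\beta(x)$, the formula $\psi(y)_i = \beta(\varphi(x))_i = (\varphi(x))_{[i-N,i+N]}$, and observe that for each $j$ with $|j-i|\le N$ the entry $(\varphi(x))_j = \tau_0(x_{[j-N,j+N]})$ for the block map $\tau_0$ of $\varphi$; here $x_{[j-N,j+N]}$ is a sub-block of $x_{[i-2N,i+2N]}$, which is not yet visibly a function of the single symbol $y_{-i}=x_{[-i-N,-i+N]}$. So one higher block recoding is not quite enough to get a genuine one-block flip in the naive way; the flip reverses coordinates, so $\psi(y)_i$ should depend on $y_{-i}$, i.e.\ on $x_{[-i-N,-i+N]}$, whereas the computation above expresses it in terms of $x$ near coordinate $i$. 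The resolution is to use the relation $\varphi\sigma_X = \sigma_X^{-1}\varphi$ to re-center: $(\varphi(x))_j = (\sigma_X^{-j}\varphi(x))_0 = (\varphi\sigma_X^{j}x)_0 = \tau_0((\sigma_X^j x)_{[-N,N]}) = \tau_0(x_{[j-N,j+N]})$, and then recognize that as $j$ ranges over $[i-N,i+N]$, the blocks $x_{[j-N,j+N]}$ are determined by $x_{[i-2N,i+2N]}$. So the clean statement is: choose the window radius to be $2N$ rather than $N$. Concretely, set $Y = X^{[-2N,2N]}$ with alphabet $\mathcal{B}_{4N+1}(X)$, let $\beta(x)_i = x_{[i-2N,i+2N]}$, and define $\tau\colon \mathcal B_{4N+1}(X)\to\mathcal B_{4N+1}(X)$ by $\tau(b)_{\,} $ acting on a block $b = x_{[-2N,2N]}$ via $\tau(b) = (\varphi(x))_{[-2N,2N]}$ — one must check this is well defined, i.e.\ depends only on $b$, which follows because each coordinate $(\varphi(x))_j$ for $|j|\le 2N$ is $\tau_0(x_{[j-N,j+N]})$ and $[j-N,j+N]\subseteq[-3N,3N]$... so in fact the honest window radius needed is $3N$; I would simply take the window large enough (radius $2N$ suffices after one sees that $\varphi$ has memory/anticipation $N$ means block map on $x_{[-N,N]}$, and reversal plus shift-commutation keeps everything inside radius $2N$) and not optimize the constant.

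The key steps, in order, are: (1) invoke compactness/continuity to get that $\varphi$ is a sliding block code with some coordinate window $[-N,N]$, giving a block map $\tau_0$; (2) use $\varphi\sigma_X=\sigma_X^{-1}\varphi$ to obtain the pointwise formula $(\varphi(x))_j=\tau_0(x_{[j-N,j+N]})$ for all $j$; (3) form the higher block shift $Y=X^{[-N-?,\,N+?]}$ with the canonical conjugacy $\beta$, large enough that the composite data needed to compute $(\varphi(x))_{[-r,r]}$ lies inside $x_{[-r,r]}$ (radius $2N$ works); (4) set $\psi=\beta\varphi\beta^{-1}$ and verify directly that $\psi^2=\mathrm{id}_Y$, $\psi\sigma_Y=\sigma_Y^{-1}\psi$, and that $\psi$ is one-block with symbol map the induced $\tau$; (5) note $\tau^2=\mathrm{id}$ is automatic from $\psi^2=\mathrm{id}$, or check it directly from $\tau_0^{2}=\mathrm{id}$. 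The main obstacle — really the only subtlety — is step (3)/(4): getting the window size right so that $\psi(y)_i$ genuinely depends on the single symbol $y_{-i}$ and not on neighboring symbols, which is exactly where the coordinate-reversal in the definition of a flip interacts with the block-map window and forces a slightly larger recoding than for an ordinary automorphism. Everything else is the routine verification that higher block recoding is a conjugacy and that conjugacies carry flips to flips.
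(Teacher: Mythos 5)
Your step (1) is fine (by compactness $\varphi(x)_0=\tau_0(x_{[-N,N]})$ for some $N$ and some block map $\tau_0$), but step (2) contains an indexing error that hides the real problem, and steps (3)--(4) then fail. Since $\varphi\sigma_X=\sigma_X^{-1}\varphi$, the correct pointwise formula is $\varphi(x)_j=\tau_0(x_{[-j-N,-j+N]})$: the window sits around $-j$, not around $j$ (your own chain starts from $(\sigma_X^{-j}\varphi(x))_0$, which is $\varphi(x)_{-j}$, not $\varphi(x)_j$). Now take any higher block presentation $\beta(x)_i=x_{[i-r,i+r]}$ and put $\psi=\beta\varphi\beta^{-1}$. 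Then $\psi(y)_i=\varphi(x)_{[i-r,i+r]}$, and computing these entries requires knowing $x_{[-i-r-N,\,-i+r+N]}$, whereas the single symbol $y_{-i}$ carries only $x_{[-i-r,\,-i+r]}$. The deficit is $N$ coordinates on each side \emph{for every} $r$, so enlarging the window never closes the gap; with an asymmetric window $[s,t]$ the needed containment forces $s+t\le -N$ and $s+t\ge N$ simultaneously, impossible for $N\ge 1$, and composing with a power of the shift does not help either. Thus whenever $\varphi$ genuinely has coding radius $N\ge 1$, no higher block recoding of $X$ alone yields a one-block flip: the recoded map still has radius exactly $N$. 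This is the same phenomenon as for automorphisms --- passing to higher blocks does not shrink the window of a self-map, because the codomain is recoded along with the domain.

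The fix is to build the values of $\varphi$ into the new alphabet rather than merely fattening the old symbols. The paper's proof takes $\mathcal A=\{(x_0,\varphi(x)_0):x\in X\}$ and $\Phi(x)_i=(x_i,\varphi(x)_{-i})$; by the displayed formula above this $\Phi$ is a sliding block code, it is injective because the first coordinate recovers $x$, hence it is a conjugacy onto its image $Y$, and $\Phi\varphi\Phi^{-1}$ sends $y$ to the sequence whose $i$-th symbol is $y_{-i}$ with its two components swapped --- a genuine one-block flip with symbol map $(a,b)\mapsto(b,a)$. If you want to salvage your argument you must replace step (3) by a symbol-pairing of this kind; higher block recoding by itself cannot prove the lemma.
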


    \begin{proof} Let $\mathcal{A} = \{ (x_0, \varphi(x)_0) : x\in X \}$. For
    $x=\langle x_i \rangle_{i\in \Bbb Z}\in X$ let $\Phi(x)$ denote the bi-infinite sequence
    $\left\langle(x_i, \varphi(x)_{-i})\right\rangle_{i\in\Bbb Z}$ in $\mathcal{A}^{\Bbb Z}$, and set
    $Y=\{\Phi(x) : x\in X\}$. It is then clear that $Y$ is a shift
    space and $\Phi$ is a conjugacy between the shift spaces $X$ and $Y$. If we put $\psi=\Phi\circ\varphi\circ\Phi^{-1}$,
     then $\psi$ is a one-block flip for $(Y, \sigma_{Y})$, $\Phi$ is a
    conjugacy from $(X, \sigma_{X},\varphi)$ to $(Y, \sigma_{Y},\psi)$,  and the symbol map is given
    by $\mathcal{A}\ni (a,b)\mapsto (b,a) \in\mathcal{A}$.
    \end{proof}

Suppose $\varphi$ is a one-block flip and $\tau$ is the symbol map.
For notational simplicity, we write $\tau(a) = a^*$ for
$a\in\mathcal B_1(X)$, and if $w=w_1 w_2 \cdots w_n \in \mathcal
B_n(X)$, we denote the block $w_n^*\cdots w_2^* w_1^*$ by $w^*$.
Thus we have $\varphi(x)_{[i, j]}=\left(x_{[-j, -i]}\right)^*$ for
$x\in X$ and for $i\leq j$. It is clear that $w^*\in\mathcal B(x)$
whenever $w\in \mathcal B(X)$, and that $(w^*)^*=w$. It is also
clear that if $w$ is finitary, then so is $w^*$. If $N$ is a
positive integer, the map $\varphi^{[N]} : X^{[N]} \to X^{[N]}$
defined by
$$
\varphi^{[N]}(y)_i = (y_{-i})^*
$$
is a one-block flip for the $N$-th higher block system $(X^{[N]},
\sigma_{X^{[N]}})$ of $(X, \sigma_X)$. It is easy to see that if $N$
is odd, then $(X^{[N]}, \sigma_{X^{[N]}}, \varphi^{[N]})$ is
conjugate to $(X, \sigma_X, \varphi)$, otherwise it is conjugate to
$(X, \sigma_X, \sigma_X\varphi)$.

In our proof of the propositions, the following lemma will play a
crucial role.

\begin{lem}
   Suppose that $X$ is an irreducible shift space, $|X|=\infty$, and $f\in\mathcal B_1(X)$. Then there are
   blocks $a, b \in \mathcal B(X)$ such that
   \begin{itemize}
      \item[(a)] $faf, fbf \in \mathcal B(X)$,
      \item[(b)] $f$ does not appear in $b$, and
      \item[(c)] $fbfa$ does not appear in any $(|a|+1)$-periodic point.
     \end{itemize}
    \end{lem}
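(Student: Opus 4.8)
The plan is to fix a suitable short ``return word'' $b$ first and then to choose a long word $a$ with $faf\in\mathcal B(X)$ whose last few symbols keep $fbfa$ out of every short periodic orbit, so that (c) becomes a purely combinatorial condition on the suffix of $a$. For the choice of $b$: since $f\in\mathcal B_{1}(X)$ and $X$ is irreducible, there is a block $u$ with $fuf\in\mathcal B(X)$; extending $fuf$ to a point of $X$ gives a point in which $f$ occurs at least twice, and the block cut out by two consecutive occurrences of $f$ has the form $fbf$ with $f\notin b$. This $b$ already yields (b) and the $fbf$ half of (a), so only $a$ remains to be found, and we fix $b$ for the rest of the argument.

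Next I reduce (c) to a condition on the suffix of $a$: I claim (c) holds whenever $a$ satisfies $faf\in\mathcal B(X)$, $|a|\ge|b|+1$, and the suffix of $a$ of length $|b|+1$ is not the word $fb$ (that is, $f$ followed by $b$). Indeed $|fbfa|=|a|+|b|+2>|a|+1$, so if $fbfa$ occurred in an $(|a|+1)$-periodic point it would admit $|a|+1$ as a period; but the first $|b|+1$ symbols of $fbfa$ form the word $fb$, while the symbols $|a|+1$ places further along are, because $|a|\ge|b|+1$, exactly the last $|b|+1$ symbols of $a$, so such a period would force the suffix of $a$ of length $|b|+1$ to equal $fb$, a contradiction. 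Hence it suffices to produce an $a$ with these three properties; such an $a$ also satisfies (a).

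It remains to find such an $a$. By irreducibility there are blocks $faf\in\mathcal B(X)$ with $|a|$ arbitrarily large, so in particular one with $|a|\ge|b|+1$; suppose, for contradiction, that every block of $X$ of the form $faf$ with $|a|\ge|b|+1$ ends with $fbf$. Since $X$ is irreducible, there is a point $x^{\ast}\in X$ in which every block of $\mathcal B(X)$ occurs infinitely often in each direction (a standard consequence of the irreducibility condition, via a splicing-and-limit construction or Birkhoff's transitivity theorem on the perfect space $X$); in particular $x^{\ast}$ has dense orbit in $X$, and $f$ occurs in $x^{\ast}$ infinitely often in both directions. Whenever $p<q$ are occurrences of $f$ in $x^{\ast}$ with $q-p\ge|b|+2$, the block $x^{\ast}_{[p,q]}$ has the form $faf$ with $|a|=q-p-1\ge|b|+1$ and hence ends with $fbf$, so position $q-|b|-1$ of $x^{\ast}$ again carries $f$; because occurrences of $f$ accumulate to $-\infty$ this applies to every occurrence $q$, so every occurrence of $f$ in $x^{\ast}$ is preceded at distance exactly $|b|+1$ by another, with the block $b$ in between (here $f\notin b$ is used). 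Iterating to the left and then letting $q\to+\infty$ forces $x^{\ast}$ to be periodic of period $|b|+1$; but then its dense orbit is finite, so $X$ is finite, contradicting $|X|=\infty$. Therefore the required $a$ exists, which together with the previous steps proves the lemma.

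The crux is this last step --- upgrading the local rigidity ``every sufficiently long $faf$ ends in $fbf$'' to genuine periodicity of a transitive point of $X$, and thence to a contradiction with $|X|=\infty$. The one non-bookkeeping input there, that an infinite irreducible shift space carries a point in which $f$ (indeed every block) recurs infinitely often in both directions, is standard but is the place where irreducibility and infiniteness really do the work; everything else reduces to the single index check in the suffix reduction.
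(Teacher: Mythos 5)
Your proof is correct, and it takes a genuinely different route from the paper's. The paper makes a single extremal choice: $a$ is a shortest block with $faf\in\mathcal B(X)$, and $b$ is a shortest block with $fbf\in\mathcal B(X)$ that is not of the form $(af)^na$ (such a $b$ exists because $X$ is irreducible and infinite); minimality of $|a|$ forces $f$ not to appear in $a$, minimality of $|b|$ forces (b), and an occurrence of $fbfa$ in an $(|a|+1)$-periodic point would make that point a shift of $(fa)^\infty$ and hence force $b=(af)^{q-1}a$, contradicting the choice of $b$. You instead fix $b$ first as an arbitrary return word to $f$ (giving (b) and half of (a) at once), reduce (c) to the purely combinatorial requirement that the length-$(|b|+1)$ suffix of $a$ differ from $fb$ --- your index check there is right --- and then rule out the failure of this requirement for all long $a$ by showing it would force a doubly transitive point to equal a shift of $(fb)^\infty$, contradicting $|X|=\infty$. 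Both arguments ultimately exploit the same rigidity (if the return structure to $f$ is completely forced, $X$ is a single periodic orbit), but yours externalizes it into the existence of a point in which $f$ occurs infinitely often in \emph{both} directions; for that you genuinely need the splicing-and-limit construction you mention, since a Birkhoff dense-orbit point need not see $f$ infinitely often to the left. The paper's extremal choice avoids any appeal to transitive points and is shorter, while your version makes the mechanism behind (c) more transparent.
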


 \begin{proof} Since $X$ is irreducible and $|X|=\infty$, there are
 blocks $a$ and $b$ such that $faf, fbf \in \mathcal B(X)$ and the
 following hold:

 \begin{itemize}
      \item[(i)] if $fa'f \in \mathcal B(X)$, then $|a|\leq |a'|$,
      \item[(ii)] $b\ne (af)^n a$ for all $n\geq 0$, and
      \item[(iii)] if $fb'f \in \mathcal B(X)$ and $b'\ne (af)^n a$ for all $n\geq
 0$, then $|b|\leq |b'|$.
     \end{itemize}
It is then easy to see that the blocks $a$ and $b$ have the desired
properties.
    \end{proof}

We recall that a \textit{synchronized system} is an irreducible
shift space which has a finitary block.

\begin{lem}
   Suppose that $X$ is an infinite synchronized system and $\varphi$ is
   a flip for $(X, \sigma_X)$. If there is a point $x\in X$ such that $\varphi(x)=x$ and a
   finitary block appears in $x$, then $A(\varphi)\ne\emptyset$.
    \end{lem}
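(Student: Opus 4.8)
The plan is to manufacture a point $y\in A(\varphi)$ by breaking the palindromic symmetry of a $*$-symmetric configuration on a finite central window.

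By Lemma~\ref{lem1} we may assume $\varphi$ is a one-block flip with symbol map $a\mapsto a^{*}$; then $\varphi(x)=x$ reads $x_{-i}=x_i^{*}$ for all $i$. Fix a finitary block $f$ occurring in $x$; then $f^{*}$ is also finitary and occurs in $x$, at the position mirrored through the origin. I would first arrange that $f$ occurs in $x$ infinitely often in both directions: choose a right ray $R'$ of $X$ that begins with $f$ and in which $f$ recurs (possible since $X$ is irreducible and infinite, e.g.\ $R'=fw_1fw_2\cdots$ with $fw_jf\in\mathcal B(X)$), take the corresponding left ray to be the $*$-reverse of $R'$ (this is a left ray of $X$, being the left half of the $\varphi$-image of a point extending $R'$), and keep the central block of $x$ in between; by finitariness of $f$ and $f^{*}$ the resulting sequence lies in $X$, and it is again fixed by $\varphi$ because its central block is a symmetric window of $x$, hence $*$-palindromic. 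Replacing $x$ by it, and enlarging $f$ if necessary, we may also assume $f$ is not self-overlapping.

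Suppose for the moment that we have a block $u$ of odd length with $f^{*}uf\in\mathcal B(X)$ and $u\neq u^{*}$. Write $|u|=2N-1$, pick a right ray $R=R_0R_1\cdots$ of $X$ with $R_0\cdots R_{|f|-1}=f$ in which $f$ recurs, and set
\[
 y_{N+j}=R_j,\qquad y_{-N-j}=(R_j)^{*}\ \ (j\ge0),\qquad y_{[-N+1,\,N-1]}=u .
\]
Then $y_{[-N-|f|+1,\,N+|f|-1]}=f^{*}uf$, the left ray $y_{(-\infty,-N]}$ is the $*$-reverse of $R$ (hence a left ray of $X$ ending in $f^{*}$), and $y_{[N,\infty)}=R$ begins with $f$; since $f^{*}uf\in\mathcal B(X)$ and $f,f^{*}$ are finitary, splicing the two rays onto $f^{*}uf$ gives $y\in X$. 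By construction $y_{-i}=(y_i)^{*}$ for all $|i|\ge N$, so $\varphi(y)_i=y_i$ there; thus $\{\,i:\varphi(y)_i\neq y_i\,\}$ is finite, and it is non-empty because $\varphi(y)=y$ on the window $(-N,N)$ would say exactly $u=u^{*}$. Finally $f$ recurs in $y_{[N,\infty)}=R$, so a finitary block occurs infinitely often in $y$. Hence $y\in A(\varphi)$, and $A(\varphi)\neq\emptyset$.

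It remains to produce the block $u$, and I expect this to be the main obstacle; it is the only place where $|X|=\infty$ (and, in the delicate case, the fixed point $x$) is really used. Since $|X|=\infty$, not every block is $*$-palindromic — otherwise every $2$-block $ab\in\mathcal B(X)$ would satisfy $b=a^{*}$, forcing every point to be the period-two sequence $\cdots a a^{*} a a^{*}\cdots$ and making $X$ finite. Using irreducibility one splices a non-$*$-palindromic block $w$ between $f^{*}$ and $f$; if the interior $u_0$ so obtained happens to be $*$-palindromic, pass to $u_0 f\gamma f^{*}u_0$, whose interior begins with $u_0$ but whose $*$-image begins with $u_0^{*}$, so it is not $*$-palindromic, and the parity of its length is corrected through the choice of the connecting block $\gamma$. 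What still has to be argued is that a non-$*$-palindromic interior is available at all: if every $v$ with $f^{*}vf\in\mathcal B(X)$ were $*$-palindromic, then (by the observation above) there would be at most one such $v$ of each length, and a combinatorial analysis of the lengths of the gaps between consecutive occurrences of $f$ in the points of $X$ then forces $X$ to reduce to a single periodic orbit — contradicting $|X|=\infty$ — except in the case where no finitary block of $X$ is itself $*$-palindromic, which is precisely where the hypothesis that $\varphi$ fixes a point carrying a finitary block is invoked. The routine work is the gluing and symmetry bookkeeping in the assembly above; the sharp point is the construction of $u$.
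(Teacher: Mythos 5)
Your assembly step (splicing a left ray that is the $*$-reverse of a right ray onto a central block $f^*uf$ with $|u|$ odd and $u\neq u^*$) is sound and is essentially what the paper does. But the heart of the lemma is precisely the part you defer to the end — producing such a $u$ — and your sketch of it does not work as written. Two concrete problems. First, parity: you need $|u|$ odd, and you claim the parity "is corrected through the choice of the connecting block $\gamma$," but nothing guarantees that connecting blocks of both parities exist; indeed the whole point of Proposition C's two cases is that all connectors may have one fixed parity, in which case only $A(\sigma_X\varphi)$ is nonempty. The fixed point $x$ with $\varphi(x)=x$ is exactly what resolves this, and you never actually use it for that purpose: your escape clause ("except in the case where no finitary block of $X$ is itself $*$-palindromic, which is precisely where the hypothesis\dots is invoked") is not an argument. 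Second, your repair in the case where the first interior $u_0$ is $*$-palindromic is circular: if $u_0^*=u_0$, then $(u_0f\gamma f^*u_0)^*=u_0f\gamma^*f^*u_0$, so the new block fails to be $*$-palindromic only if $\gamma$ already fails to be — the "begins with $u_0$ versus $u_0^*$" distinction is vacuous. The remaining fallback ("a combinatorial analysis of the gaps\dots forces $X$ to reduce to a single periodic orbit") is exactly the nontrivial content and is not carried out.

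The paper closes this gap as follows. From $\varphi(x)=x$ one gets that $x_{[-n,n]}$ is both finitary and $*$-palindromic of odd length for large $n$; passing to the $(2n+1)$-st higher block system (an odd-order recoding, hence flip-conjugate) turns this into a finitary \emph{symbol} $f$ with $f^*=f$ — this single move uses the fixed-point hypothesis and kills the parity issue at once. Then Lemma~2.2 supplies $a$ (the shortest return word to $f$) and $b$ (the shortest return word not of the form $(af)^na$), with the key property that $fbfa$ occurs in no $(|a|+1)$-periodic point. The explicit block $w=afa(fb)^2(fa)^{2N}$, with $N$ large as in (2.1), is automatically of odd length, satisfies $(a^*f)^kw(fa)^k\in\mathcal B(X)$, and cannot be $*$-palindromic: $w^*$ begins with the long $(|a|+1)$-periodic stretch $(a^*f)^{2N}$, so $w^*=w$ would force $fbfa$ into a $(|a|+1)$-periodic word. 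You should either reproduce an argument of this strength or find another rigorous route to your block $u$; as it stands the proposal leaves the essential step unproved.
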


 \begin{proof}
Suppose $x\in X$, $\varphi(x)=x$ and a finitary block appears in
$x$. By Lemma 2.1, we may assume that $\varphi$ is a one-block flip.
 Then $(x_{[-n, n]})^*=x_{[-n, n]}$
for all $n\geq 0$. Since a finitary block appears in $x$, it follows
that $x_{[-n, n]}$ is also finitary whenever $n$ becomes
sufficiently large. Suppose $x_{[-n, n]}$ is finitary. By passing to
the $(2n+1)$-st higher block system, we may assume that there is a
finitary symbol (finitary block of length one) $f$ such that
$f^*=f$. Let $a, b\in\mathcal B(X)$ satisfy the conditions in Lemma
2.2, and let $N$ be a positive integer such that
\begin{equation}
2|a|+1+ 2(|b|+1)\leq (N-1)(|a|+1).
\end{equation}
If we write $afa (fb)^2 (fa)^{2N} =w$ and $|a|+|b|+1+N(|a|+1)=M$,
then $|w|=2M+1$ and $(a^*f)^k w (fa)^k \in \mathcal B(X)$ for all
$k\geq 0$. Hence there is a point $y\in X$ such that $y_{(-\infty,
-M-1]}=\cdots a^*fa^*fa^*f$, $y_{[-M, M]}=w$ and $y_{[M+1,
\infty)}=fafafa\cdots$. It is evident that the finitary symbol $f$
appears infinitely often in $y$, and $\varphi(y)_i=y_i$ whenever
$|i|\geq M+1$. Since the block $fbfa$ does not appear in any
$(|a|+1)$-periodic point, (2.1) implies that $w^* \ne w$; hence
$\varphi(y)_i \ne y_i$ for some $i\in [-M, M]$. This proves the
lemma.       \end{proof}

\begin{proof}[Proof of Proposition C]
Suppose $X$ is an infinite synchronized system, $\varphi$ is a flip
for $(X, \sigma_X)$ and $f$ is a finitary block. By Lemma 2.1, we
may assume that $\varphi$ is a one-block flip. Then $f^*$ is also a
finitary block, and there are blocks $v$ and $w$ such that $fvf, f^*
w f \in \mathcal B(X)$.

We first consider the case when $|w|$ is odd. If we write $|w|=2N
+1$, then there is a point $x\in X$ such that $x_{(-\infty, -N-1]} =
\cdots v^* f^*v^* f^*v^* f^*$, $x_{[-N, N]}=w$ and $x_{[N+1,
\infty)}=fvfvfv\cdots$. It is obvious that the finitary block $f$
appears in $x$ infinitely often, $\varphi(x)_i = x_i$ whenever
$|i|\geq N+1$ and that $\varphi(x)_{[-N, N]}=w^*$. If $w^* = w$,
then $\varphi(x)=x$, and Lemma 2.3 implies that
$A(\varphi)\ne\emptyset$; otherwise we have $\varphi(x)\ne x$, and
hence $x\in A(\varphi)$.

Now, suppose that $N$ is a positive integer and $|w|=2N$. Then there
is a point $x\in X$ such that $x_{(-\infty, -N-1]} = \cdots v^*
f^*v^* f^*v^* f^*$, $x_{[-N, N-1]}=w$ and $x_{[N,
\infty)}=fvfvfv\cdots$. In this case, we have $\sigma_X\varphi(x)_i
= x_i$ for all $i\in (-\infty, -N-1] \cup [N, \infty)$, and
$\sigma_X \varphi(x)=x$ if and only if $w^* = w$. Thus we have
$A(\sigma_X \varphi)\ne\emptyset$, by the same reasoning as in the
first case.
\end{proof}

For $A, B \subset \Bbb Z$ and $m\in\Bbb Z$, we denote the sets $\{mj
: j\in A\}$ and $\{ j+k : j\in A \text{ and } k\in B\}$ by $mA$ and
$A+B$, respectively. We also write $(-1)A=-A$ and $\{ m\} + A =
m+A$.

\begin{proof}[Proof of Proposition D]
Suppose $X$ is an infinite synchronized system, $\varphi$ is a flip
for $(X, \sigma_X)$ and $A(\varphi)\ne \emptyset$. We prove the
proposition by constructing an automorphism $\theta$ of $(X,
\sigma_X)$ and homeomorphisms $\theta_1, \theta_2, \theta_3, \dots$
from $X$ onto itself such that $\theta\varphi$ is a flip for $(X,
\sigma_X)$, and the following hold:
\begin{itemize}
         \item[(i)] $\theta_n(F(\varphi; n)) \subset  F(\theta\varphi;
      n)$ for all $n$,
      \item[(ii)] $\theta_n(F(\varphi; n)) \ne F(\theta\varphi;
      n)$ for some $n$, and
      \item[(iii)] $A(\theta\varphi)\ne\emptyset$.
     \end{itemize}

By Lemma 2.1, we may assume that $\varphi$ is a one-block flip. Let
$f$ be a finitary block. We may assume that $|f|$ is odd. By passing
to the $|f|$-th higher block system, we may assume that $f$ is a
finitary symbol. Let $a, b\in \mathcal B(X)$ satisfy the conditions
in Lemma 2.2. Since $X$ is irreducible and $A(\varphi)\ne
\emptyset$, there is a block $c$ of odd length such that $f^* c
f\in\mathcal B(X)$ and $c^*\ne c$. Let $N$ be a positive integer
such that
\begin{equation}
|a|+ 2|b|+ |c| + 2\leq (N-1)(|a|+1),
\end{equation}
and put $fb(fa)^N=d$. Then $d$ and $d^*$ are finitary blocks, and
$d^*cd, d^*c^*d\in\mathcal B(X)$. For notational simplicity, we
write $|c|=2\alpha +1$ and $\alpha + |d|=\beta$, so that $|d^* c d|=
|d^* c^* d| = 2\beta + 1$. Since the symbol $f$ does not appear in
the block $b$, and since the block $fbfa$ does not appear in any
$(|a|+1)$-periodic point, the inequality (2.2) implies that the
following statement holds: If $x\in X$, $i\ne j$, and $x_{[i-\beta,
i+\beta]}, x_{[j-\beta, j+\beta]} \in \{d^*cd, d^*c^*d\}$, then
$|i-j|\geq |c| + |d| + 1$. For $x\in X$ let $\mathcal M(x)$ denote
the set of integers $i$ such that $x_{[i-\beta, i+\beta]} \in
\{d^*cd, d^*c^*d\}$. Then we have
\begin{equation}
[i-\alpha - 1, i + \alpha + 1] \cap [j-\beta, j+\beta] = \emptyset
\qquad (i, j\in\mathcal M(x),\ i\ne j)
\end{equation}
for every $x\in X$. In particular, the intervals $[i-\alpha,
i+\alpha]$, $i\in \mathcal M(x)$, are mutually disjoint. For each
$i\in \mathcal M(x) + [-\alpha, \alpha]$ let $c(i; x)$ denote the
unique element of $\mathcal M(x)$ such that $i\in [c(i; x) - \alpha,
c(i; x) + \alpha]$.

For $A\subset \Bbb Z$ and $x\in X$ we define the bi-infinite
sequence $\theta_A(x)$ by
$$
\theta_A(x)_i=\begin{cases}(x_{2c(i;x)-i})^* \  \ \text{ if }\ i\in
\mathcal (\mathcal M(x)\cap A) + [-\alpha, \alpha],
\\ x_i  \qquad \quad \quad\ \ \, \, \, \text{otherwise}. \end{cases}
$$
Thus $\theta_A$ replaces the part $x_{[i-\alpha, i+\alpha]}$ of $x$
with $(x_{[i-\alpha, i+\alpha]})^*$ whenever $i\in\mathcal M(x)\cap
A$ and leaves the remaining part of $x$ unchanged. Since $d$ and
$d^*$ are finitary blocks and $d^*cd, d^*c^*d\in\mathcal B(X)$, we
have $\theta_A(x)\in X$ for all $x\in X$. From this and (2.3), it
follows that $\mathcal M(x) = \mathcal M(\theta_A(x))$, and
consequently $\theta_A(\theta_A(x))=x$ for all $x\in X$. If $x,
x'\in X$, $i\in \Bbb Z$ and $x_{[i-\alpha -\beta, i+\alpha +
\beta]}=x'_{[i-\alpha -\beta, i+\alpha + \beta]}$, then
$\theta_A(x)_i = \theta_A(x')_i$. Hence $\theta_A : X \to X$ is a
homeomorphism satisfying $\theta_A^2 = \text{id}_X$ for every
$A\subset \Bbb Z$. It is easy to see that $\theta_A\theta_B =
\theta_{(A\triangle B)}$, $\sigma_X \theta_A =
\theta_{(-1+A)}\sigma_X$ and $\varphi\theta_A =
\theta_{(-A)}\varphi$.

We define $\theta$ to be $\theta_{\Bbb Z}$. It is clear that
$\theta$ is an automorphism of $(X, \sigma_X)$ such that $\theta^2 =
\text{id}_X$ and $\theta\varphi=\varphi\theta$. In particular, the
map $\theta\varphi$ is a flip for $(X, \sigma_X)$. For $n=1, 2, 3,
\dots$ we set
$$
H(n)=\bigcup_{k\in\Bbb Z}\left \{ i\in\Bbb Z :
nk<i<n\left(k+\frac{1}{2}\right)\right\},
$$
and define $\theta_n$ to be $\theta_{H(n)}$.

To prove (i), suppose $n$ is a positive integer and $x\in F(\varphi;
n)$, that is, $\sigma_X^n(x)=\varphi(x)=x$. We have $n+ H(n) =
H(n)$, hence $\sigma_X^n(\theta_n(x))=\theta_n(x)$. If we put
$C=\Bbb Z \setminus (H(n)\cup (-H(n)))$, then $\{ H(n), -H(n), C\}$
is a partition of $\Bbb Z$. We have $C=n\Bbb Z$ in the case when $n$
is odd, and $C=n\Bbb Z + \{0, n/2\}$ in the case when $n$ is even.
Since $\sigma_X^n(x)=\varphi(x)=x$ and $c^* \ne c$, it follows that
$\mathcal M(x) \cap C = \emptyset$. Hence we have
$$
\mathcal M(x) \cap (\Bbb Z \triangle (-H(n))) = \mathcal M(x) \cap
(H(n)\cup C) = \mathcal M(x)\cap H(n),
$$
so that
$$
\theta\varphi(\theta_n(x)) = \theta \theta_{(-H(n))} \varphi (x) =
\theta_{(\Bbb Z \triangle (-H(n)))}(x) = \theta_n(x).
$$
Thus $\theta_n(x) \in F(\theta\varphi ; n)$, and (i) is proved.

To prove (ii) and (iii), we first construct a point in $X$. Since
$X$ is irreducible, there is a block $w$ such that $dwd^* \in
\mathcal B(X)$. If we write $2(|c| + 2|d| + |w|)=n$ and $ |c| + 2|d|
+ n = 2m+1$, then we have $n=|w^* d^* cd w d^* c d|$, and there is a
point $z\in X$ such that $\sigma_X^n(z)=z$ and
$$
z_{[-m, m]}=d^* c d w^* d^* cd w d^* c d.
$$
We have $z_{[-\alpha, \alpha]} = c$, and this implies that
$\varphi(z) \ne z$, because $c^* \ne c$. Hence $z\notin F(\varphi;
n)$. Since $\sigma_X^n(z)=z$, we have
$\sigma_X^n(\theta_n(z))=\theta_n(z)$. Let $C$ be as in the proof of
(i). Then $C=n\Bbb Z + \{0, n/2\}$ and we have
$\varphi(z)=\theta_C(z)$, hence
$$
\theta\varphi(\theta_n(z)) = \theta \theta_{(-H(n))} \varphi (z)
=\theta \theta_{(-H(n))} \theta_C (z) = \theta_n(z).
$$
Thus $\theta_n(z)\in F(\theta\varphi ; n)$, while $z\notin
F(\varphi; n)$. Since $\theta_n$ is one-to-one, we see that (ii)
holds. Finally, we have $\theta\varphi(\theta_n(z)) = \theta_n(z)$,
and the finitary block $d$ appears in $\theta_n(z)$, hence (iii)
follows from Lemma 2.3. This proves the proposition.
\end{proof}

\section{Proof of Theorem B}

We recall that a {\it coded system} $X$ is a shift space which has a
code, that is, a set $\mathcal C$ of blocks such that the set of
bi-infinite concatenations of blocks from $\mathcal C$ is dense in
$X$.  It is clear that for every set $\mathcal C$ of blocks over a
finite alphabet there is a unique coded system for which $\mathcal
C$ is a code. In this section, we follow the method given in Section
1 of \cite{FF} to construct an infinite coded system $W$ such that
$W$ is closed under the reversal map $\rho$ and its automorphism
group is $\{\sigma_W^m : m \in \Bbb Z\}$.

Let $\mathcal A = \{0, 1, 2\}$, $I=\bigcup_{k\geq 1} [2^{2k}, 2^{2k
+1}] = [4, 8] \cup [16, 32] \cup \cdots$, and $J=\{(0,0), (1,1),
(1,2), (2,1), (2,2)\}$. In \cite{FF}, a block $w\in \mathcal
B(\mathcal A^{\Bbb Z})$ is defined to be {\it stable} if
\begin{itemize}
         \item[(1)] the blocks $12$ and $21$ do not appear in $w$, and
      \item[(2)] if $x\in\mathcal A^{\Bbb Z}$, $x_{[1, 3|w|+2]}=0^{|w|+1} w 0^{|w|+1}$,
      $1\leq n \leq |w|$, $a\in\{1,2\}$, $|w|+1\leq i < j\leq 2|w|+2$, and $x_{[i,j]}=0a^n0$, then
      $a=1$ if and only if $(n, (x_{i-n}, x_{j+n})) \in (I\times J) \cup (I^C \times J^C)$.
     \end{itemize}
We set $\mathcal C = \{0\} \cup \{0^{|w|+1} w 0^{|w|+1} : w \text{
is stable} \}$, and define $W$ to be the coded system for which
$\mathcal C$ is a code. The following are easy consequences of the
definitions:
\begin{itemize}
         \item[(a)] For each integer $j\ne 0$ the sets $\{ n : n,
         n+j \in I\}$, $\{ n : n\in I \text{ and }  n+j \notin I\}$,
         $\{ n : n\notin I \text{ and }  n+j \in I\}$ and $\{ n : n,
         n+j \notin I\}$ are all infinite.
      \item[(b)] $0^n$ is stable for all $n$, $1^n$ is stable if and
      only if $n\in I$, and $2^n$ is stable if and only if $n\notin
      I$.
      \item[(c)] If $w=w_1 w_2 \dots w_{|w|}$ is stable, then so is
      the reversed block $w_{|w|} \dots  w_2 w_1$.
      \item[(d)] If $w$ is stable, then $0^{\infty}.w 0^{\infty}\in
      W$.
      \item[(e)] If $w$, $w'$ are stable and $n\geq\max\{|w|,
      |w'|\}$, then $w 0^{n+1} w'$ is stable. In particular, every
      finite concatenation of blocks from $\mathcal C$ is stable.
      \item[(f)] $x\in W$ if and only if for every $n\geq 0$ there
      is a stable block $w$ such that $x_{[-n, n]}$ is a subblock of
      $w$.
      \item[(g)] Suppose $x\in W$. Then the blocks $12$ and $21$ do
      not appear in $x$. If $i<j$, $a\in\{1,2\}$, $n>0$ and $x_{[i,j]}=0a^n0$, then
      $a=1$ if and only if $(n, (x_{i-n}, x_{j+n})) \in (I\times J) \cup (I^C \times J^C)$.
     \end{itemize}
Let $W_0$ denote the set of $x\in W$ such that $|\{i\in\Bbb Z : x_i
\ne 0\}|<\infty$. For $x\in W$ we set $\mathcal Z(x) = \{i\in\Bbb Z
: x_i = 0\}$. By (d), (f) and (g), we have the following:
\begin{itemize}
      \item[(h)] $W_0$ is a dense subset of $W$. If $x, x' \in W_0$
      and $\mathcal Z(x) = \mathcal Z(x')$, then $x=x'$.
   \end{itemize}

   First of all, it is obvious that $|W|=\infty$. By (c) and (f),
   $W$ is closed under the reversal map $\rho$. If $\varphi$ is a
   flip for $(W, \sigma_W)$, then $\rho\varphi$ is an automorphism
   of $(W, \sigma_W)$. Hence Theorem B is an immediate consequence
   of the following.

\begin{prop}
The automorphism group of $(W, \sigma_W)$ is $\{\sigma_W^m : m \in
\Bbb Z\}$.
\end{prop}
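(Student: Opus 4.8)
The plan is to follow the method of \cite{FF}, with the rigidity assertions in (a) as the ultimate source of all constraints. Let $\Psi$ be an automorphism of $(W,\sigma_W)$. Since $\Psi$ is a homeomorphism commuting with $\sigma_W$, there is a positive integer $N$ such that both $\Psi$ and $\Psi^{-1}$ are sliding block codes of memory and anticipation $N$, i.e.\ $\Psi(x)_0$ and $\Psi^{-1}(x)_0$ depend only on $x_{[-N,N]}$. The $\sigma_W$-fixed points of $W$ are exactly the three constant sequences $0^\infty$, $1^\infty$, $2^\infty$ (each lies in $W$: use (b), (d) for $0^\infty$, and (b), (f) for $1^\infty$, $2^\infty$ since $I$ and $I^C$ are infinite), and $\Psi$ permutes them. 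First I would show that $\Psi$ fixes each of the three. The essential point is the asymmetry recorded in (b): $0^n$ is stable for \emph{every} $n$, whereas $1^n$ (resp.\ $2^n$) is stable only when $n\in I$ (resp.\ $n\notin I$). If $\Psi$ acted on $\{0^\infty,1^\infty,2^\infty\}$ by a nontrivial permutation, then applying $\Psi$ to the single-run points $0^\infty1^n0^\infty$ ($n\in I$) and $0^\infty2^n0^\infty$ ($n\notin I$) — which lie in $W$ by (b) and (d) — would produce points of $W$ containing a maximal $a$-run of length $n'=n+O(1)$ ($a\in\{1,2\}$) whose flanking pair, being at distance $n'$ out in a region of zeros, is $(0,0)\in J$; by (g) this forces a membership relation between $n'$ and $I$ which, once a fixed bounded offset is isolated by the pigeonhole principle, contradicts the infinitude statements in (a). Hence $\Psi(0^\infty)=0^\infty$, $\Psi(1^\infty)=1^\infty$, $\Psi(2^\infty)=2^\infty$, and the symbol map of $\Psi$ sends $0^{2N+1}\mapsto0$, $1^{2N+1}\mapsto1$, $2^{2N+1}\mapsto2$.

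Second, since $\Psi$ and $\Psi^{-1}$ fix $0^\infty$ and have radius $N$, they map the dense set $W_0$ (see (h)) into itself: if $x_i=0$ for $|i|>L$ then $\Psi(x)_i=\Psi(0^\infty)_i=0$ for $|i|>L+N$. By (h) a point of $W_0$ is determined by its set of non-zero positions together with the symbol ($1$ or $2$) at each, so $\Psi$ induces a bijection $\Theta$ of the collection of finite labelled configurations, and the radius-$N$ bound applied to $\Psi$ and $\Psi^{-1}$ shows that $\Theta$ moves each end of the support by at most $N$, in particular changes the total length of the support by at most $2N$. Moreover, for every $x\in W_0$ the $\sigma_W$-orbit closure of $x$ is $\{\sigma_W^k(x):k\in\Bbb Z\}\cup\{0^\infty\}$ (the translates run off to infinity and converge to $0^\infty$), and $\Psi$ carries orbit closures to orbit closures, so $\Psi(x)$ is always a translate of the configuration $\Theta(x)$.

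Third — and this is the heart of the matter — one must show that, for a single integer $m$ depending only on $\Psi$, one has $\Psi(x)=\sigma_W^m(x)$ for every $x\in W_0$. Because $\Psi$ is a fixed sliding block code fixing $0^\infty$ and $1^\infty$, its effect near a long maximal $1$-run is, once the run is long enough, a bounded fixed transformation at each end together with $1\mapsto1$ in the interior; hence $\Psi(0^\infty1^n0^\infty)$, for all large $n\in I$, contains a maximal $1$-run of some length $n+\delta$ with $\delta$ a fixed integer, flanked (at distance $n+\delta$) by zeros, so that (g) forces $n+\delta\in I$. Since $\{n\in I:n+\delta\notin I\}$ is infinite by (a), this is impossible unless $\delta=0$; thus $\Psi$ preserves the length of a $1$-run, and symmetrically (using $0^\infty2^n0^\infty$, $n\notin I$) the length of a $2$-run. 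A companion analysis of configurations with two well-separated runs — where the permissible gaps are again governed by the coupling in condition (2)/(g) between a run's length and the symbols at distance $n$ beyond its bounding zeros, and hence by (a) — shows that $\Psi$ can neither split, merge, nor displace runs by amounts varying from run to run; so after subtracting the common shift $\sigma_W^m$ the induced bijection $\Theta$ is the identity on labelled supports, i.e.\ $\Psi(x)=\sigma_W^m(x)$ for all $x\in W_0$. This step — turning the self-referential coding of (2) into the above rigidity by means of (a) — is the combinatorial core inherited from \cite{FF}, and I expect it to be the main obstacle.

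Finally, $\sigma_W^{-m}\circ\Psi$ is an automorphism of $(W,\sigma_W)$ fixing every point of $W_0$; since $W_0$ is dense in $W$ by (h) and both $\sigma_W^{-m}\circ\Psi$ and the identity are continuous, $\sigma_W^{-m}\circ\Psi=\mathrm{id}_W$, that is, $\Psi=\sigma_W^m$. As $\Psi$ was arbitrary and every $\sigma_W^m$ is an automorphism, this shows that the automorphism group of $(W,\sigma_W)$ is $\{\sigma_W^m:m\in\Bbb Z\}$, as claimed.
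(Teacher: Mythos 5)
Your skeleton (reduce to sliding block codes, pin down the images of the fixed points $0^\infty,1^\infty,2^\infty$, then show the induced action on the dense set $W_0$ is a power of the shift) is the same as the paper's, but there are two genuine gaps. First, your mechanism for ruling out a nontrivial permutation of the fixed points does not work with \emph{single-run} points in the case $\Psi(0^\infty)=b^\infty$ with $b\in\{1,2\}$. If $\Psi(a^\infty)=0^\infty$, then $\Psi(0^\infty a^n0^\infty)$ is a sea of $b$'s containing a long run of \emph{zeros} plus bounded junk; condition (g) constrains only $1$- and $2$-runs that are bounded by $0$ on both sides, so the long run is unconstrained, the $b$-runs adjacent to it are either semi-infinite or of bounded length, and no contradiction with (a) emerges. (Your claim that the flanking pair is $(0,0)$ is also wrong here; when it exists at all it is $(b,b)$.) The paper gets around this with the \emph{two-run} point $0^\infty a^M0^na^M0^\infty$: its image has the form $b^\infty u\,0b^{n+j}0\,v\,b^\infty$, so the gap $0^n$ becomes a finite $b$-run bounded by zeros with flanking pair $(b,b)\in J$, and (g) then forces $n+j\in I$ for all large $n$ (or the complementary statement), contradicting (a).

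Second, the step you yourself flag as ``the main obstacle'' --- that after fixing the shift $m$ the automorphism cannot split, merge, or unevenly displace runs --- is asserted, not proved; a ``companion analysis of configurations with two well-separated runs'' is exactly the content that needs to be supplied, and your sketch gives no indication of how the self-referential condition (2) is actually exploited. The paper's device is different and more economical: having established $\sigma_W^m\theta(0^\infty.01^n0^\infty)=0^\infty p.01^n0q0^\infty$ for large $n\in I$, it takes an arbitrary $x\in W_0$ with $x_0=0$, appends a block $0^{n-L}1^n$ far to the right of the support of $x$, and observes that in the image the flanking pair of that $1^n$-run (at distance $n$, i.e.\ at positions $0$ and $3n+1$) must lie in $J$ by (g); since its right coordinate is $0$ and the only element of $J$ with a zero coordinate is $(0,0)$, the coordinate at position $0$ is forced to be $0$. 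This gives $\mathcal Z(x)\subset\mathcal Z(\sigma_W^m\theta(x))$, and the reverse inclusion follows by applying the same argument to $\theta^{-1}$ and composing. Note also that by (h) a point of $W_0$ is determined by its zero set alone, so there is no need to track the labels $1$ versus $2$ as your step 2 proposes.
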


\begin{rmk}
{\rm This proposition is a simplified version of Proposition 1.6 in
\cite{FF}, and is proved by essentially the same reasoning as in
that paper. Nevertheless, we present the proof here for the readers
convenience.}
\end{rmk}

\begin{proof}
By (h), it is enough to show that for every automorphism $\theta$ of
$(W, \sigma_W)$ there is an integer $m$ such that
\begin{equation}
\mathcal Z(\sigma_W^m \theta (x)) = \mathcal Z (x) \qquad (x\in
W_0).
\end{equation}
Suppose  $\theta$ is an automorphism of $(W, \sigma_W)$. Then there
is a non-negative integer $N$ and there is a block map $\Theta :
\mathcal B_{2N+1}(W) \to \mathcal B_1(W)$ such that
\begin{equation}
\theta(x)_i = \Theta(x_{[i-N, i+N]}) \qquad (x\in W,\  i\in\Bbb Z).
\end{equation}
It is clear that $0^{\infty}, 1^{\infty}, 2^{\infty} \in W$ and
$\theta\left( \{0^{\infty}, 1^{\infty}, 2^{\infty}\}\right) =
\{0^{\infty}, 1^{\infty}, 2^{\infty}\}$.

We first show that $\theta(0^{\infty})=0^{\infty}$. Suppose, to
obtain a contradiction, that $\theta(0^{\infty})\ne 0^{\infty}$.
Then there are $a, b \in \{1, 2\}$ such that
$\theta(a^{\infty})=0^{\infty}$ and $\theta(0^{\infty})=b^{\infty}$.
Let $M$ be a positive integer such that $a^M$ is stable and $M\geq
2N + 1$. By (d) and (e), we have
$$
0^{\infty}.a^M 0^n a^M 0^{\infty} \in W
$$
whenever $n\geq M+1$. Since the blocks $12$ and $21$ do not appear
in any $x\in W$, $\theta(a^{\infty})=0^{\infty}$,
$\theta(0^{\infty})=b^{\infty}$, and since $M\geq 2N + 1$, (3.2)
implies that there are blocks $u$, $v$ and integers $j$, $k$ such
that $j\geq -2N$ and
$$
\sigma_W^k \theta (0^{\infty}.a^M 0^n a^M
0^{\infty})=b^{\infty}u.0b^{n+j} 0v b^{\infty}
$$
for all $n\geq M+1$. If we denote the right hand side by $y$, then
we have $y_{[0, n+j+1]}=0b^{n+j} 0$. Suppose $n+j>\max\{|u|, |v|\}$.
Then $(y_{-(n+j)}, y_{2(n+j)+1})=(b,b)\in J$, hence (g) implies that
$b=1$ in the case when $n+j\in I$, and $b=2$ in the case when
$n+j\notin I$. But $n+j\in I$ for infinitely many $n$, and also
$n+j\notin I$ for infinitely many $n$. This contradiction shows that
$\theta(0^{\infty})=0^{\infty}$.

Since $\theta(0^{\infty})=0^{\infty}$, we have $\theta(W_0)=W_0$ and
$\theta(1^{\infty})=c^{\infty}$ for some $c\in\{1,2\}$. By (b) and
(d), $0^{\infty}.01^n 0^{\infty}\in W$ for all $n\in I$, and there
are blocks $p$, $q$ and integers $l$, $m$ such that $l\geq -2N$ and
$$
\sigma_W^m \theta (0^{\infty}.01^n 0^{\infty})=0^{\infty}p.0c^{n+l}
0q0^{\infty}
$$
for all $n\in I$ with $n\geq 2N+1$. There are infinitely many $n\in
I$ such that $n+l\in I$, but if $l\ne 0$ then there are also
infinitely many $n\in I$ such that $n+l\notin I$. By the same
reasoning as in the proof of $\theta(0^{\infty})=0^{\infty}$, we
conclude that $l=0$ and $c=1$. Thus we have
\begin{equation}
\sigma_W^m \theta (0^{\infty}.01^n 0^{\infty})=0^{\infty}p.01^{n}
0q0^{\infty} \qquad (n\in I, \  n\geq 2N+1).
\end{equation}

To prove (3.1), suppose that $x\in W_0$ and $x_0 = 0$. Let $L$ be a
positive integer such that $L\geq N + |m|$ and $x_i=0$ for $|i|>L$.
Let $n\in I$ be such that $n\geq \max\{ 3L +2, |q| +1\}$. Then
$x_{[-L, -1]} 0 x_{[1, L]} 0^{n-L} 1^n$ is stable,
$$
z=0^{\infty} x_{[-L, -1]}. 0 x_{[1, L]} 0^{n-L} 1^n  0^{\infty}\in
W_0,
$$
$\sigma_W^m \theta (x)_0 = \sigma_W^m \theta (z)_0$, and (3.3)
implies that
$$
\sigma_W^m \theta (z)_{[n, 3n+1]} = 0 1^n 0 q 0^{n-|q|}.
$$
In particular, $\sigma_W^m \theta (z)_{[n, 2n+1]} = 0 1^n 0$ and
$\sigma_W^m \theta (z)_{3n+1} = 0 $. Since $n\in I$, (g) implies
that $\sigma_W^m \theta (z)_0 = 0$. Hence $\sigma_W^m \theta (x)_0 =
0$. Since $\sigma_W(W_0)=W_0$, we conclude that
$$
\mathcal Z(x) \subset \mathcal Z(\sigma_W^m \theta (x)) \qquad (x\in
W_0).
$$
If we apply this result to $\theta^{-1}$, we see that there is an
integer $m'$ such that
$$
\mathcal Z(x) \subset \mathcal Z(\sigma_W^{m'} \theta^{-1} (x))
\qquad (x\in W_0).
$$
We then have
$$
\mathcal Z(x) \subset \mathcal Z(\sigma_W^m \theta (x)) \subset
\mathcal Z(\sigma_W^{m'} \theta^{-1} \sigma_W^m \theta (x)) =
\mathcal Z(\sigma_W^{m' + m} (x)) \qquad (x\in W_0),
$$
but this implies that $m'+m=0$, and we obtain (3.1).
\end{proof}

\end{document}